\documentclass[11pt]{amsart}

\usepackage{amsmath}
\usepackage{amsthm}
\usepackage{amssymb}
\usepackage{upref}
\usepackage{amscd}
\usepackage[all]{xy}

\setlength{\textheight}{21cm}
\setlength{\textwidth}{15cm}
\setlength{\oddsidemargin}{0cm}
\setlength{\evensidemargin}{0cm}

\def\Z{\mathbb Z}
\def\F{\mathbb F}

\def\C{\mathbb C}

\def\Qa{\overline{\mathbb Q}\mspace{1mu}}
\def\Qp{{\mathbb Q}_p}
\def\Zp{{\mathbb Z}_p}
\def\Fp{{\mathbb F}_{\!p}}
\def\Qpa{\overline{\mathbb Q}_p}

\def\eqdef{\underset{\text{\tiny def}}{=}}

\DeclareMathOperator{\Ker}{Ker}

\DeclareMathOperator{\Endom}{End}
\DeclareMathOperator{\Hom}{Hom}
\DeclareMathOperator{\aut}{Aut}

\DeclareMathOperator{\Isom}{Isom}
\DeclareMathOperator{\Mat}{Mat}

\DeclareMathOperator{\pchar}{P_{char}}

\DeclareMathOperator{\gal}{Gal}

\DeclareMathOperator{\D}{{\bf D}}
\DeclareMathOperator{\Dcr}{{\bf D}^*_{\text{\tiny cris}}}
\DeclareMathOperator{\Vcr}{{\bf V}^*_{\text{\tiny cris}}}
\DeclareMathOperator{\Fil}{Fil}

\DeclareMathOperator{\adm}{ad}

\begingroup
\theoremstyle{plain}
\newtheorem{thm}{Theorem}[section]
\newtheorem{prop}[thm]{Proposition}

\theoremstyle{definition}

\newtheorem{rem}[thm]{Remark}

\endgroup

\begin{document}

\title[Supersingular abelian surfaces with non semisimple Tate module]{Abelian surfaces with supersingular good reduction and non semisimple Tate module}

\author{Maja Volkov}

\begin{abstract}
We show the existence of abelian surfaces $\mathcal{A}$ over $\Qp$ having good reduction with supersingular special fibre whose associated $p$-adic Galois module $V_p(\mathcal{A})$ is not semisimple.
\end{abstract}

\address{Universit\'e de Mons, D\'epartement de Math\'ematique, Place du Parc 20, B-7000 Mons, Belgium.}

\email{maja.volkov@umons.ac.be}

\maketitle

{\footnotesize 2000 {\em Mathematics Subject Classification}: 11G10, 14K15, 14G20.}

{\footnotesize{\em Keywords}: Abelian varieties, local fields, Galois representations.}

\tableofcontents

\section*{Introduction}
Fix a prime number $p$ and an algebraic closure $\Qpa$ of $\Qp$. Write $G=\gal(\Qpa /\Qp)$ for the absolute Galois group of $\Qp$. For a $d$-dimensional abelian variety ${\mathcal A}$ over $\Qp$ let ${\mathcal A}[p^n]$ be the group of $p^n$-torsion points with values in $\Qpa$ and 
$$V_p({\mathcal A})\eqdef\,\Qp\otimes_{\Z_p}\varprojlim_{n\geq 1}{\mathcal A}[p^n].$$
This is a $2d$-dimensional $\Qp$-vector space on which $G$ acts linearly and continuously. We want to consider the following problem: find abelian varieties ${\mathcal A}$ over $\Qp$ having good reduction with supersingular special fibre and such that the Galois module $V_p({\mathcal A})$ is {\em not} semisimple. In this paper we show the existence of two such varieties with nonisogenous special fibres for the least dimension possible, namely for $d=2$. In fact our procedure easily generalises to any $d\geq2$, however we stick to surfaces as they furnish low-dimensional hence simple to describe representations.  

\medskip 
The existence of such surfaces follows from the characterisation of $p$-adic representations of $G$ arising from abelian varieties with (tame) potential good reduction obtained in~\cite{Vo}, and indeed provides an example of application of this result. In order to explicitely describe our objects we use Fontaine's contravariant functor establishing an equivalence between crystalline $p$-adic representations of $G$ and admissible filtered $\varphi$-modules. In section~\ref{sec:method} we briefly review this theory as well as the characterisation in~\cite{Vo} (Theorem~\ref{abvarthm}), and outline the general strategy. In sections~\ref{sec:prodellip} and~\ref{sec:absurf} we construct two filtered $\varphi$-modules arising from abelian surfaces over $\Qp$ with good reduction that enjoy the required properties (Propositions~\ref{prodellipprop} and~\ref{absurfprop}).

\section{The general method}
\label{sec:method}
Recall from~\cite{Fo2} that the objects $D$ in the category ${\bf MF}_{\Qp}(\varphi)$ of filtered $\varphi$-modules are finite dimensional $\Qp$-vector spaces together with a Frobenius map $\varphi\in\aut_{\Qp}(D)$ and a decreasing filtration $\Fil=(\Fil^iD)_{i\in \Z}$ on $D$ by subspaces such that $\Fil^iD=D$ for $i\ll 0$ and $\Fil^iD=0$ for $i\gg 0$, and the morphisms are $\Qp$-linear maps commuting with $\varphi$ and preserving the filtration. The dual of $(D,\Fil)$ is the $\Qp$-linear dual $D^*$ with $\varphi_{D^*}=\varphi^{*\,-1}$ and $\Fil^iD^*$ consists of linear forms on $D$ vanishing on $\Fil^jD$ for all $j>-i$. The Tate twist $D\{-1\}$ of $(D,\Fil)$ is $D$ as a $\Qp$-vector space with $\varphi_{D\{-1\}}=p\varphi$ and $\Fil^iD\{-1\}=\Fil^{i-1}D$. The filtration $\Fil$ has Hodge-Tate type $(0,1)$ if $\Fil^iD=D$ for $i\leq 0$, $\Fil^iD=0$ for $i\geq 2$, and $\Fil^1D$ is a nontrivial subspace. The full subcategory ${\bf MF}_{\Qp}^{\adm}(\varphi)$ of ${\bf MF}_{\Qp}(\varphi)$ consists of objects $(D,\Fil)$ satisfying a property relating the Frobenius with the filtration, called admissibility and defined as follows. For a $\varphi$-stable sub-$\Qp$-vector space $D^{\prime}$ of $D$ consider the Hodge and Newton invariants
$$t_H(D^{\prime})\eqdef\,\sum_{i\in\Z}i\dim_{\Qp}\!\bigl(D^{\prime}\!\cap\Fil^{i}\!D\big{/}D^{\prime}\!\cap\Fil^{i+1}\!D\bigr)\quad\text{ and }\quad t_N(D^{\prime})\eqdef\,v_p(\det\varphi_{\!\mid D^{\prime}})$$
where $v_p$ is the normalised $p$-adic valuation on $\Qp$. Then $(D,\Fil)$ is admissible if 
\begin{itemize}
\item[(i)] $t_H(D)=t_N(D)$
\item[(ii)] $t_H(D^{\prime})\leq t_N(D^{\prime})$ for any sub-$\Qp[\varphi]$-module $D^{\prime}$ of $D$. 
\end{itemize} 
A sub-$\Qp[\varphi]$-module $D^{\prime}$ endowed with the induced filtration $\Fil^{i}D^{\prime}=D^{\prime}\cap\Fil^{i}D$ is a subobject of $(D,\Fil)$ in ${\bf MF}_{\Qp}^{\adm}(\varphi)$ if and only if $t_H(D^{\prime})= t_N(D^{\prime})$. 

Let $B_{\text{\tiny cris}}$ be the ring of $p$-adic periods constructed in~\cite{Fo1} and for a $p$-adic representation $V$ of $G$ put
$$\Dcr(V)\eqdef\,\Hom_{\Qp[G]}(V,B_{\text{\tiny cris}}).$$
We always have $\dim_{\Qp}\Dcr(V)\leq \dim_{\Qp}V$ and $V$ is said to be crystalline when equality holds. The functor $V\mapsto\Dcr(V)$ establishes an anti-equivalence between the category of crystalline $p$-adic representations of $G$ and ${\bf MF}_{\Qp}^{\adm}(\varphi)$, a quasi-inverse being $\Vcr(D,\Fil)=\Hom_{\varphi,\Fil}(D,B_{\text{\tiny cris}})$ (\cite{Co-Fo}). These categories are well-suited to our problem since for an abelian variety ${\mathcal A}$ over $\Qp$ the $G$-module $V_p({\mathcal A})$ is crystalline if and only if ${\mathcal A}$ has good reduction (\cite{Co-Io} Thm.4.7 or~\cite{Br} Cor.5.3.4.).

\medskip
A $p$-Weil number is an algebraic integer such that all its conjugates have absolute value $\sqrt{p}$ in $\C$. Call a monic polynomial in $\Z[X]$ a $p$-Weil polynomial if all its roots in $\Qa$ are $p$-Weil numbers and its valuation at $X^2-p$ is even. Consider the following conditions on a filtered $\varphi$-module $(D,\Fil)$ over $\Qp$:
\begin{itemize}
\item[(1)] $\varphi$ acts semisimply and $\pchar(\varphi)$ is a $p$-Weil polynomial
\item[(2)] the filtration has Hodge-Tate type $(0,1)$
\item[(3)] there exists a nondegenerate skew form on $D$ under which $\varphi$ is a $p$-similitude and $\Fil^1D$ is totally isotropic.
\end{itemize}
Recall that $\varphi$ is a $p$-similitude under a bilinear form $\beta$ if $\beta(\varphi x,\varphi y)=p\beta(x,y)$ for all $x,y\in D$ and $\Fil^1D$ is totally isotropic if $\beta(x,y)=0$ for all $x,y\in\Fil^1D$. The map sending $\delta\in\Isom_{\Qp}(D^*,D)$ to $\beta:(x,y)\mapsto\delta^{-1}(x)(y)$ identifies the antisymmetric isomorphisms of filtered $\varphi$-modules from $D^*\{-1\}$ to $D$ with the forms satisfying (3). A $\Qp$-linear map $\delta:D^*\rightarrow D$ is an antisymmetric morphism in ${\bf MF}_{\Qp}(\varphi)$ if $\delta^*=-\delta$ (under the canonical isomorphism ${D^*}^*\simeq D$), $\varphi\delta=p\delta\varphi^{*\,-1}$, and $\delta(\Fil^1D)^{\perp}\subseteq\Fil^1D$. 

\begin{rem}
Let $\Hom_{\varphi}^{\text{a}}(D^*\{-1\},D)$ be the $\Qp$-vector space of antisymmetric $\varphi$-module morphisms from $D^*\{-1\}$ to $D$ and pick any $\delta\in\Isom_{\varphi}^{\text{a}}(D^*\{-1\},D)$. Then $\alpha^{\dag}=\delta\alpha^*\delta^{-1}$ defines an involution $\dag$ on $\Endom_{\varphi}(D)$ and the map $\alpha\mapsto\alpha\delta$ establishes an isomorphim $\Endom_{\varphi}(D)^{\dag}\xrightarrow{\sim}\Hom_{\varphi}^{\text{a}}(D^*\{-1\},D)$ where $\Endom_{\varphi}(D)^{\dag}$ is the subspace of elements fixed by $\dag$. 
\end{rem}

\smallskip
\begin{thm}[\cite{Vo} Corollary 5.9]
\label{abvarthm}
Let $V$ be a crystalline $p$-adic representation of $G$. The following are equivalent:
\begin{itemize}
\item[(i)] there is an abelian variety $\mathcal{A}$ over $\Qp$ such that $V\simeq V_p(\mathcal{A})$ 
\item[(ii)] $\D^*_{\text{\em\tiny cris}}(V)$ satisfies conditions $(1)$, $(2)$ and $(3)$.
\end{itemize}
\end{thm}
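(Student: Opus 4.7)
The plan is to prove the two directions separately.

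For the implication (i) $\Rightarrow$ (ii), I would start from $V \simeq V_p(\mathcal{A})$ and invoke the crystalline comparison theorem to identify $\Dcr(V_p(\mathcal{A}))$ with the first crystalline cohomology of the special fibre $\mathcal{A}_s$, equipped with the Hodge filtration coming from the de Rham cohomology of $\mathcal{A}_{\Qp}$. Condition (2) is then immediate, since $H^1_{dR}$ of an abelian variety has Hodge type $(0,1)$. The Weil conjectures for abelian varieties over $\F_p$ yield the $p$-Weil property of $\pchar(\varphi)$, while Tate's theorem on endomorphisms supplies the semisimplicity, giving (1). Any polarization on $\mathcal{A}$ produces a Weil pairing on $V_p(\mathcal{A})$, which under $\Dcr$ dualises to a skew form on $D$ making $\varphi$ a $p$-similitude and under which $\Fil^1D$ is totally isotropic (the polarization being compatible with the Hodge filtration), yielding (3). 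Evenness of the valuation of $\pchar(\varphi)$ at $X^2-p$ comes from the fact that the supersingular isotypic piece of $\mathcal{A}_s$ has even height.

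For (ii) $\Rightarrow$ (i), the strategy is to reconstruct $\mathcal{A}$ by first producing an integral model and passing through $p$-divisible groups. By condition (2) and admissibility, $(D,\Fil)$ lies in the Fontaine--Laffaille range, so Fontaine--Laffaille theory (or the more general Breuil--Kisin classification) yields a $p$-divisible group $\mathcal{G}/\Zp$ with generic Tate module canonically isomorphic to $\Vcr(D,\Fil) \simeq V$, and condition (3) equips $\mathcal{G}$ with a principal polarization. The special fibre $\mathcal{G}_s/\F_p$ has Dieudonn\'e module with Frobenius of characteristic polynomial $\pchar(\varphi)$, so by Honda--Tate theory combined with Tate's isogeny theorem, condition (1) --- including the parity constraint at $X^2-p$ --- allows one to identify $\mathcal{G}_s$ with $A_0[p^{\infty}]$ for some abelian variety $A_0/\F_p$. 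The Serre--Tate theorem then supplies a unique abelian scheme $\mathcal{A}/\Zp$ with $\mathcal{A}_{\F_p}=A_0$ and $\mathcal{A}[p^{\infty}]=\mathcal{G}$; taking the generic fibre yields $\mathcal{A}_{\Qp}$ with $V_p(\mathcal{A}_{\Qp}) \simeq V$.

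The main obstacle is the realisation of the supersingular part of $\mathcal{G}_s$ as arising from an abelian variety over $\F_p$. An irreducible supersingular isocrystal of height $2$ over $\F_p$ is the Dieudonn\'e module of a supersingular elliptic curve, but Honda--Tate produces an abelian variety over $\F_p$ with a given Weil number only when arithmetic compatibility conditions are met; the evenness of the valuation of $\pchar(\varphi)$ at $X^2-p$ is precisely what ensures the supersingular piece descends to $\F_p$ (rather than only to $\F_{p^2}$) as an actual abelian variety. Carefully propagating the polarization through the chain of equivalences --- filtered $\varphi$-module, $p$-divisible group, abelian scheme --- so that the skew form in (3) becomes a genuine principal polarization on $\mathcal{A}$ compatible with the Hodge filtration is the most delicate technical point.
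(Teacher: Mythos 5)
Your overall strategy coincides with the paper's (and with the proof of Theorem 5.7 in \cite{Vo}, to which the paper defers): Weil conjectures, Tate's theorem and the polarisation for (i)$\Rightarrow$(ii); Honda--Tate, the Kisin/Breuil--Kisin classification of $p$-divisible groups over $\Zp$ and Serre--Tate for (ii)$\Rightarrow$(i). The only cosmetic difference is that you construct the $p$-divisible group over $\Zp$ before invoking Honda--Tate rather than after, which changes nothing of substance.

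There is, however, one genuine gap in your (ii)$\Rightarrow$(i): the Serre--Tate theorem does not ``supply an abelian scheme over $\Zp$''. It identifies deformations of $A_0$ with deformations of $A_0[p^{\infty}]$, and therefore only produces a compatible system of abelian schemes over the rings $\Z/p^n\Z$, i.e.\ a \emph{formal} abelian scheme over $\Zp$. To conclude one must algebraize, and this is precisely where condition (3) is logically indispensable rather than a bookkeeping nuisance: the skew form yields a polarisation, hence an ample line bundle on the formal scheme, and Grothendieck's existence theorem (\cite{Gr} 5.4.5) then shows that the formal abelian scheme is the completion of a true abelian scheme over $\Zp$, whose generic fibre is the desired $\mathcal{A}$. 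As written, your argument stops at a formal object with no generic fibre. A smaller imprecision: the evenness of the valuation of $\pchar(\varphi)$ at $X^2-p$ in (i)$\Rightarrow$(ii) is not a statement about supersingular pieces in general (e.g.\ $X^4+pX^2+p^2$ is supersingular and has no factor $X^2-p$); it is the fact, coming out of Honda--Tate over $\Fp$, that the simple abelian variety attached to the real Weil number $\pm\sqrt{p}$ is a surface with characteristic polynomial $(X^2-p)^2$, so the corresponding isotypic component always contributes an even power of $X^2-p$.
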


Note that the restriction $p\neq 2$ in~\cite{Vo} Theorem 5.7 and its Corollary 5.9 is unnecessary as Kisin shows that a crystalline representation with Hodge-Tate weights in $\{0,1\}$ arises from a $p$-divisible group unrestrictidly on the prime $p$ (\cite{Ki} Thm.0.3).

Let $\mathcal{A}$ be an abelian variety over $\Qp$ having good reduction and $(D,\Fil)=\Dcr(V_p(\mathcal{A}))$. The $\varphi$-module $D$ satisfies (1) by the Weil conjectures for abelian varieties over $\F_p$. Tate's theorem on endomorphisms of the latter (see~\cite{Wa-Mi} II) shows that the isomorphism class of the $\varphi$-module $D$, given by semisimplicity by $\pchar(\varphi)$, determines the isogeny class of the special fibre of $\mathcal{A}$ over $\Fp$. Any polarisation on ${\mathcal A}$ induces a form on $D$ satisfying (3) and the filtration satisfies (2) by the Hodge decomposition for $p$-divisible groups and (3).

Conversely let $V$ be a crystalline $p$-adic representation of $G$ such that $\Dcr(V)$ satisfies (1), (2), (3). From (1) the Honda-Tate theory (\cite{Ho-Ta}) furnishes an abelian variety $A$ over $\F_p$ with the right Frobenius. From (2) Kisin's result~\cite{Ki} furnishes a $p$-divisible group over $\Zp$ lifting $A(p)$. The Serre-Tate theory of liftings then produces a formal abelian scheme $\mathcal{A}$ over $\Zp$ with special fibre isogenous to $A$. Finally (3) furnishes a polarisation on $\mathcal{A}$ which ensures by Grothendieck's theorem on algebraisation of formal schemes (\cite{Gr} 5.4.5) that $\mathcal{A}$ is a true abelian scheme. The proof of Theorem 5.7 in~\cite{Vo} details this construction.

\medskip
Thus we want to construct an admissible filtered $\varphi$-module $(D,\Fil)$ over $\Qp$ satisfying conditions (1), (2), (3) of theorem~\ref{abvarthm} and such that
\begin{itemize}
\item[(a)] $\pchar(\varphi)$ is a supersingular $p$-Weil polynomial
\item[(b)] $(D,\Fil)$ is not semisimple.
\end{itemize}

Recall that a $p$-Weil polynomial is supersingular if its roots are of the form $\zeta\sqrt{p}$ with $\zeta\in\Qa$ a root of unity, and that an abelian variety $A$ over $\Fp$ is supersingular if and only if the characteristic polynomial of its Frobenius is supersingular. Regarding (a) in section~\ref{sec:prodellip} we take $\pchar(\varphi)(X)=(X^2+p)^2$ which is the characteristic polynomial of the Frobenius of the product of a supersingular elliptic curve $E$ over $\Fp$ with itself. In section~\ref{sec:absurf} we take $\pchar(\varphi)(X)=X^4+pX^2+p^2$ which is the characteristic polynomial of the Frobenius of a simple supersingular abelian surface over $\Fp$.

Regarding (b) we assume $p\equiv 1\bmod 3\Z$ in section~\ref{sec:absurf}. In each (a)-case we find a subobject $D_1$ of $(D,\Fil)$ in ${\bf MF}_{\Qp}^{\adm}(\varphi)$ and a quotient object $D_2$ (endowed with the quotient filtration $\Fil^iD_2=\Fil^iD\bmod D_1$) such that the sequence 
$$\text{\sc (s)}\qquad\xymatrix{1\ar[r]  &  D_1\ar[r]^{\text{\tiny incl}} & D\ar[r]^{\text{\tiny proj}} &  D_2\ar[r] & 1}$$
is exact and $D_2$ is {\em not} a subobject. Thus $\text{\sc (s)}$ does not split and therefore $(D,\Fil)$ is not semisimple. Of course when $(D,\Fil)\simeq\Dcr(V_p(\mathcal{A}))$ this means that there is a nonsplit short exact sequence of $G$-modules
$$\xymatrix{1\ar[r]  &  V_2\ar[r] & V_p(\mathcal{A})\ar[r] &  V_1\ar[r] & 1}$$
with $V_i\simeq\Vcr(D_i)$ for $i=1,2$, and it follows that $V_p(\mathcal{A})$ is not a semisimple $G$-module.

\section{A lift of the twofold product of a supersingular elliptic curve}
\label{sec:prodellip}
Consider the filtered $\varphi$-module $(D,\Fil)$ over $\Qp$ defined as follows. There is a $\Qp$-basis $\mathcal{B}=(x_1,y_1,x_2,y_2)$ for $D$ so that 
$$D=\Qp x_1\oplus\Qp y_1\oplus\Qp x_2\oplus\Qp y_2$$ 
is a 4-dimensional $\Qp$-vector space. The matrix of $\varphi$ over $\mathcal{B}$ is 
$$\Mat_{\mathcal{B}}(\varphi)=\,\begin{pmatrix}
0 & -p & 0 & 0 \\
1 & 0  & 0 & 0 \\
0 & 0  & 0 & -p \\
0 & 0  & 1 & 0
\end{pmatrix}\in\,GL_4(\Qp)$$
and the filtration is given by  
$$\Fil^0D=D,\quad\Fil^1D=\,\Qp x_1\oplus\Qp(y_1+x_2),\quad\Fil^2D=0.$$

\begin{prop}
\label{prodellipprop}
There is an abelian surface $\mathcal{A}$ over $\Qp$ such that $(D,\Fil)\simeq\D^*_{\text{\em\tiny cris}}(V_p(\mathcal{A}))$. Further 
\begin{itemize}
\item[(a)] $\mathcal{A}$ has good reduction with special fibre isogenous to the product of two supersingular elliptic curves over $\Fp$
\item[(b)] the $G$-module $V_p(\mathcal{A})$ is not semisimple. 
\end{itemize}
\end{prop}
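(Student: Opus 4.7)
The plan is to apply Theorem~\ref{abvarthm} to the specified filtered $\varphi$-module $(D,\Fil)$, obtain the surface $\mathcal A$, and read parts (a) and (b) off the explicit data. First I would verify admissibility: a direct computation gives $t_H(D)=\dim_{\Qp}\Fil^1 D=2=v_p(p^2)=t_N(D)$, and since $\varphi$ has irreducible minimal polynomial $X^2+p$ over $\Qp$, the only proper nontrivial $\varphi$-stable subspaces are the $2$-dimensional $K$-lines $L$, where $K:=\Qp[\varphi]/(\varphi^2+p)\simeq\Qp(\sqrt{-p})$. Each such line has $t_N(L)=v_p(p)=1$, and the key identity $\Fil^1 D\cap\varphi(\Fil^1 D)=0$ rules out $L\subseteq\Fil^1 D$ and forces $t_H(L)\le 1$. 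Next I would check the three conditions: $\pchar(\varphi)=(X^2+p)^2$ is a supersingular $p$-Weil polynomial and $\varphi$ is semisimple (distinct roots of the minimal polynomial); the Hodge--Tate type is $(0,1)$ by definition; and for the skew form I would take the block-off-diagonal antisymmetric matrix $B=\bigl(\begin{smallmatrix}0&J\\-J^\top&0\end{smallmatrix}\bigr)$ with $J=\bigl(\begin{smallmatrix}0&1\\-1&0\end{smallmatrix}\bigr)$ in $\mathcal B$, for which a direct verification yields $\Mat_{\mathcal B}(\varphi)^\top B\,\Mat_{\mathcal B}(\varphi)=pB$ and vanishing of the associated form on $\Fil^1 D$.

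Theorem~\ref{abvarthm} then produces an abelian variety $\mathcal A/\Qp$ with $\Dcr(V_p(\mathcal A))\simeq(D,\Fil)$; since $\dim_{\Qp}D=4$, $\mathcal A$ is a surface, and crystallinity of $V_p(\mathcal A)$ yields good reduction. For part~(a), the characteristic polynomial of Frobenius on $\mathcal A_{\Fp}$ is $(X^2+p)^2$, and Honda--Tate identifies the corresponding isogeny class over $\Fp$ as $E\times E$ for a supersingular elliptic curve $E/\Fp$ with Frobenius satisfying $X^2+p$.

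The main work is part~(b). I would take $D_1:=\Qp x_1\oplus\Qp y_1$, which is $\varphi$-stable since $\varphi(x_1)=y_1$ and $\varphi(y_1)=-p x_1$, and for which $\Fil^1 D_1=\Qp x_1$ gives $t_H(D_1)=1=t_N(D_1)$; hence $D_1$ is a subobject of $(D,\Fil)$ in ${\bf MF}_{\Qp}^{\adm}(\varphi)$. With $D_2:=D/D_1$ carrying the quotient filtration $\Fil^1 D_2=\Qp\bar x_2$, the plan is to show that the short exact sequence
$$0\longrightarrow D_1\longrightarrow D\longrightarrow D_2\longrightarrow 0$$
admits no section in ${\bf MF}_{\Qp}^{\adm}(\varphi)$; non-semisimplicity of $(D,\Fil)$ then follows, and the exact antiequivalence $\Vcr$ transports it to non-semisimplicity of $V_p(\mathcal A)$ as a $G$-module. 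The hard part will be this non-splitting, which reduces to ruling out every $\varphi$-stable complement $C$ of $D_1$ in $D$ for which $\Fil^1 C=C\cap\Fil^1 D$ projects isomorphically onto $\Fil^1 D_2$ under $D\twoheadrightarrow D_2$.
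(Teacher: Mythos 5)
The first two thirds of your plan are sound and essentially match the paper: the admissibility check (your observation that $\Fil^1D\cap\varphi(\Fil^1D)=0$ rules out $\varphi$-stable planes inside $\Fil^1D$ is actually more careful than the paper's), the verification of conditions (1) and (2), the symplectic form $B$ (one checks directly that $\Mat_{\mathcal{B}}(\varphi)^{t}B\,\Mat_{\mathcal{B}}(\varphi)=pB$ and that $\beta(x_1,y_1+x_2)=0$), and part (a) via Honda--Tate. The problem is part (b), which is the entire point of the proposition: you announce the non-splitting as ``the hard part'', reduce it to ruling out every $\varphi$-stable complement $C$ of $D_1$ whose intersection with $\Fil^1D$ maps onto $\Fil^1D_2$, and then stop. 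As written, the proposal proves the existence of $\mathcal{A}$ and part (a) but not part (b).

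Worse, the reduction you correctly formulate cannot be completed, because such a complement exists. Take $C=\Qp(y_1+x_2)\oplus\Qp(-px_1+y_2)$. Then $\varphi(y_1+x_2)=-px_1+y_2$ and $\varphi(-px_1+y_2)=-p(y_1+x_2)$, so $C$ is $\varphi$-stable; $C\cap D_1=0$; and $C\cap\Fil^1D=\Qp(y_1+x_2)$ maps isomorphically onto $\Fil^1D_2=\Qp\bar{x}_2$. Hence $\Fil^1D=(\Fil^1D\cap D_1)\oplus(\Fil^1D\cap C)$, the sequence $\text{\sc (s)}$ splits, and $(D,\Fil)\simeq D_1\oplus C$ is semisimple in ${\bf MF}_{\Qp}^{\adm}(\varphi)$. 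You should be aware that the paper's own proof hinges at this exact point on the assertion that $D_1$ and $D_2$ are the only nontrivial $\varphi$-stable subspaces; that assertion is false here, since $\Qp[\varphi]\simeq\Qp(\sqrt{-p})$ is a field over which $D$ is $2$-dimensional, so the $\varphi$-stable planes form a whole projective line --- precisely the $K$-lines you identified. In fact, for this $\varphi$-module any $u\in\Fil^1D\setminus D_1$ spans a $\Qp[\varphi]$-line complementing $D_1$ and meeting $\Fil^1D$, so no admissible Hodge--Tate $(0,1)$ filtration can produce a non-split extension; the phenomenon the proposition claims simply does not occur for $\pchar(\varphi)=(X^2+p)^2$. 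Contrast this with the example of Section~\ref{sec:absurf}, where the two quadratic factors of $\pchar(\varphi)$ are distinct, $D_1$ and $D_2$ really are the only nontrivial $\varphi$-stable subspaces, and the non-splitting argument is valid.
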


\begin{proof}
The filtration has Hodge-Tate type $(0,1)$ with $\dim\Fil^1D=2$ and $\det\varphi=p^2$ hence $t_H(D)=2=t_N(D)$. Since $\pchar(\varphi)(X)=(X^2+p)^2$ the nontrivial $\varphi$-stable subspaces of $D$ are the $D_i=\Qp x_i\oplus\Qp y_i$ for $i=1,2$ both having Newton invariant $t_N(D_i)=1$. However $D_1\cap\Fil^1D=\Qp x_1$ whereas $D_2\cap\Fil^1D=0$, so $t_H(D_1)=1$ and $t_H(D_2)=0$. Therefore $(D,\Fil)$ is admissible, $D_1$ is a subobject, $D_2$ is a quotient that is not a subobject, the short exact sequence $\text{\sc (s)}$ does not split and $(D,\Fil)$ is not semisimple.

The action of $\varphi$ is semisimple and $\pchar(\varphi)=\pchar(\text{Fr}_{E})^2$ where $E$ is a supersingular elliptic curve over $\Fp$ with $\pchar(\text{Fr}_{E})(X)=X^2+p$. Thus $(D,\Fil)$ satisfies condition (1) of theorem~\ref{abvarthm} as well as condition (a) of section~\ref{sec:method} and it obviously satisfies (2). It remains to check condition (3) that is to find a $\delta\in\Isom_{\Qp}(D^*,D)$ satisfying $\delta^*=-\delta$, $\varphi\delta=p\delta\varphi^{* -1}$, and $\delta(\Fil^1D)^{\perp}=\Fil^1D$. Let $\mathcal{B}^*=(x_1^*,y_1^*,x_2^*,y_2^*)$ be the dual basis of $\mathcal{B}$ for $D^*$ where $z^*$ is the linear form on $D$ sending $z\in D$ to $1$ and vanishing on all vectors noncolinear to $z$. The matrix of $p\varphi^{*\,-1}$ over $\mathcal{B}^*$ is  
 $$p\Mat_{\mathcal{B}}(\varphi^{-1})^{t}=\,\begin{pmatrix}
0 & -1 & 0 & 0 \\
p & 0  & 0 & 0 \\
0 & 0  & 0 & -1 \\
0 & 0  & p & 0
\end{pmatrix}$$
where $M^t$ is the transpose of $M$ and 
$$(\Fil^1D)^{\perp}=\,\Qp y_2^*\oplus\Qp(y_1^*-x_2^*).$$
Let $\delta:D^*\rightarrow D$ be the $\Qp$-linear morphism with matrix over the bases $\mathcal{B}^*$ and $\mathcal{B}$ 
$$\Mat_{\mathcal{B}^*\!\!,\mathcal{B}}(\delta)=\,\begin{pmatrix}
0 & 0 & 0 & -1 \\
0 & 0  & 1 & 0 \\
0 & -1  & 0 & 0 \\
1 & 0  & 0 & 0
\end{pmatrix}.$$
Then $\delta$ is invertible and satisfies the relations $\delta^*=-\delta$ and $\varphi\delta=p\delta\varphi^{*\,-1}$. Further $\delta(\Fil^1D)^{\perp}=\delta\bigl(\Qp y_2^*\oplus\Qp(y_1^*-x_2^*)\bigr)=\Qp x_1\oplus\Qp(y_1+x_2)=\Fil^1D$. 
\end{proof}

\begin{rem}
Any 2-dimensional object satisfying conditions (1) and (2) of theorem~\ref{abvarthm} also satisfies condition (3). Hence theorem~\ref{abvarthm} applied to the admissible filtered $\varphi$-modules $(D_1,\Fil^iD\cap D_1)$ and $(D_2,\Fil^iD\bmod D_1)$ shows the existence of elliptic schemes $\mathcal{E}_i$ over $\Zp$ such that $D_i\simeq\Dcr(V_p(\mathcal{E}_i))$ for $i=1,2$. The special fibres of the $\mathcal{E}_i$ are $\Fp$-isogenous to $E$. Thus we obtain a nonsplit exact sequence of $G$-modules 
$$\xymatrix{1\ar[r]  &  V_p(\mathcal{E}_2)\ar[r] & V_p(\mathcal{A})\ar[r] &  V_p(\mathcal{E}_1)\ar[r] & 1}.$$
By Tate's full faithfulness theorem~\cite{Ta} the $G$-module $V_p(\mathcal{A})$ determines the $p$-divisible group $\mathcal{A}(p)$ over $\Zp$ up to isogeny, therefore $\mathcal{A}(p)$ is not $\Zp$-isogenous to $\mathcal{E}_1(p)\times\mathcal{E}_2(p)$.  
\end{rem}

\medskip
\begin{rem}
The same construction works starting with the square of any supersingular $p$-Weil polynomial of degree two (when $p\geq 5$ there is only $X^2+p$ but when $p=2$ or $3$ there are also the $X^2\pm pX+p$). However it fails when dealing with the product of two distinct such. Indeed let $\alpha_1\neq\alpha_2\in p\Zp$ and $D$ be a semisimple 4-dimensional $\varphi$-module with $\pchar(\varphi)(X)=(X^2+\alpha_1X+p)(X^2+\alpha_2X+p)$. Then $D=D_1\oplus D_2$ with $D_i=\Ker(\varphi^2+\alpha_i\varphi +p)$, which are the nontrivial $\varphi$-stable subspaces of $D$, and $t_N(D_i)=1$. Since $\alpha_1\neq\alpha_2$ one checks that any $\Qp$-linear $\delta:D^*\rightarrow D$ satisfying $\delta^*=-\delta$ and $\varphi\delta=p\delta\varphi^{*\,-1}$ sends $D_2^{\perp}$ into $D_1$ and $D_1^{\perp}$ into $D_2$. Endowing $D$ with an admissible Hodge-Tate $(0,1)$ filtration such that $\text{\sc (s)}$ does not split amounts to picking a $2$-dimensional subspace $\Fil^1D$ such that $\dim D_1\cap\Fil^1D=1$ and $\dim D_2\cap\Fil^1D=0$ (or vice versa) ; then $\dim D_1\cap\delta(\Fil^1D)^{\perp}=0$ and $\dim D_2\cap\delta(\Fil^1D)^{\perp}=1$, therefore $\delta(\Fil^1D)^{\perp}\neq\Fil^1D$. This shows that the $p$-adic Tate modules of abelian schemes over $\Zp$ with special fibre $\Fp$-isogenous to the product of two nonisogenous supersingular elliptic curves are semisimple. 
\end{rem}

\begin{rem}
One constructs in a similar fashion for each integer $n\geq2$ a lift of the $n$-fold product of a supersingular elliptic curve over $\Fp$ with nonsemisimple $p$-adic Tate module. 
\end{rem}

\section{A lift of a simple supersingular abelian surface}
\label{sec:absurf}
In this section we assume $p\equiv 1\bmod 3\Z$ which is equivalent to $\zeta_3\in\Qp$ where $\zeta_3$ is a primitive 3rd root of unity. Consider the filtered $\varphi$-module $(D,\Fil)$ defined as follows. There is a $\Qp$-basis $\mathcal{B}=(x_1,y_1,x_2,y_2)$ for $D$ so that 
$$D=\Qp x_1\oplus\Qp y_1\oplus\Qp x_2\oplus\Qp y_2$$ 
is a 4-dimensional $\Qp$-vector space. The matrix of $\varphi$ over $\mathcal{B}$ is 
$$\Mat_{\mathcal{B}}(\varphi)=\,\begin{pmatrix}
0 & \zeta_3p & 0 & 0 \\
1 & 0  & 0 & 0 \\
0 & 0  & 0 & \zeta_3^{-1}p \\
0 & 0  & 1 & 0
\end{pmatrix}\in\,GL_4(\Qp)$$
and the filtration is given by 
$$\Fil^0D=D,\quad\Fil^1D=\Qp x_1\oplus\Qp(y_1+x_2),\quad\Fil^2D=0.$$

\begin{prop}
\label{absurfprop}
There is an abelian surface $\mathcal{A}$ over $\Qp$ such that $(D,\Fil)\simeq\D^*_{\text{\em\tiny cris}}(V_p(\mathcal{A}))$. Further 
\begin{itemize}
\item[(a)] $\mathcal{A}$ has good reduction with special fibre isogenous to a supersingular simple abelian surface over $\Fp$
\item[(b)] the $G$-module $V_p(\mathcal{A})$ is not semisimple. 
\end{itemize}
\end{prop}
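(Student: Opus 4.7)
The proof follows the plan of Proposition~\ref{prodellipprop}: verify admissibility together with conditions (1), (2), (3) of Theorem~\ref{abvarthm} and the supersingularity condition (a) from Section~\ref{sec:method}, and exhibit a sub-object and a quotient witnessing non-semisimplicity.

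From the block-diagonal form one computes $\pchar(\varphi)(X) = (X^2 - \zeta_3 p)(X^2 - \zeta_3^{-1} p) = X^4 + pX^2 + p^2 \in \Z[X]$, whose roots $\pm\sqrt{\zeta_3 p}$ and $\pm\sqrt{\zeta_3^{-1} p}$ are of the form $\zeta\sqrt{p}$ with $\zeta$ a root of unity; this is a supersingular $p$-Weil polynomial with valuation $0$ at $X^2-p$, and since the four roots are distinct, $\varphi$ is semisimple, giving (1) and (a). Condition (2) is immediate. The two factors $X^2 - \zeta_3 p$ and $X^2 - \zeta_3^{-1} p$ are distinct and irreducible in $\Qp[X]$ (odd $p$-adic valuation of the constant term), so the only nontrivial $\varphi$-stable subspaces of $D$ are $D_1 = \Qp x_1 \oplus \Qp y_1$ and $D_2 = \Qp x_2 \oplus \Qp y_2$, each with Newton invariant $1$. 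From $D_1 \cap \Fil^1D = \Qp x_1$ and $D_2 \cap \Fil^1D = 0$ one reads off $t_H(D_1) = 1$ and $t_H(D_2) = 0$; together with $t_H(D) = 2 = t_N(D)$ this yields admissibility, with $D_1$ a subobject, $D_2$ a quotient that is not a subobject, so $\text{\sc (s)}$ fails to split.

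The crucial step is condition (3). Because the Tate twist sends a Frobenius eigenvalue $\lambda$ to $p/\lambda$, on $D_1^*\{-1\}$ the Frobenius has characteristic polynomial $X^2 - \zeta_3^{-1} p$ and on $D_2^*\{-1\}$ it has $X^2 - \zeta_3 p$; hence any $\varphi$-equivariant $\delta: D^*\{-1\} \to D$ must exchange the two summands, sending $D_1^*$ into $D_2$ and $D_2^*$ into $D_1$. In particular, the antidiagonal $\delta$ used in Proposition~\ref{prodellipprop} is no longer available. Writing $\delta$ in the bases $\mathcal{B}^*$, $\mathcal{B}$ with the antisymmetry $\delta^* = -\delta$ built in leaves four free scalars; imposing $\varphi\delta = p\delta\varphi^{*-1}$ produces two linear relations, and the condition $\delta(\Fil^1D)^\perp \subseteq \Fil^1D$ (with $(\Fil^1D)^\perp = \Qp y_2^* \oplus \Qp(y_1^* - x_2^*)$) forces two more scalars to vanish. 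A one-parameter family of admissible $\delta$ remains; any nonzero member is invertible and satisfies $\delta(\Fil^1D)^\perp = \Fil^1D$ by dimension count. An explicit candidate is
$$
\Mat_{\mathcal{B}^*,\mathcal{B}}(\delta) = \begin{pmatrix} 0 & 0 & 0 & -\zeta_3 \\ 0 & 0 & -1 & 0 \\ 0 & 1 & 0 & 0 \\ \zeta_3 & 0 & 0 & 0 \end{pmatrix},
$$
whose properties can be verified by direct matrix computation.

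Theorem~\ref{abvarthm} now produces $\mathcal{A}$ over $\Qp$ with good reduction, and the non-splitting of $\text{\sc (s)}$ yields (b) via the argument at the end of Section~\ref{sec:method}. For the simplicity assertion in (a), one checks that $X^4 + pX^2 + p^2$ is irreducible over $\Q$ (any factorisation into two rational quadratics forces either $a^2 = p$ or a quadratic with negative discriminant, neither admitting rational solutions); by Honda--Tate, the $\Fp$-isogeny class of the special fibre, determined by the semisimple $\varphi$-module $D$, is that of a simple supersingular abelian surface. The principal obstacle throughout is the construction of $\delta$: the $\zeta_3$-twist forces the swap $D_1 \leftrightarrow D_2$, so the ansatz of Section~\ref{sec:prodellip} must be genuinely modified.
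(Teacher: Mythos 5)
Your proof is correct and follows the paper's argument essentially verbatim: the same admissibility and Hodge/Newton computations, the same identification of the only $\varphi$-stable subspaces $D_1$ and $D_2$, and your explicit $\delta$ is just $-1$ times the matrix the paper uses, so $\delta^*=-\delta$, $\varphi\delta=p\delta\varphi^{*\,-1}$ and $\delta(\Fil^1D)^{\perp}=\Fil^1D$ hold exactly as you claim (and your extra observations --- the eigenvalue swap forcing $\delta$ to interchange $D_1^*$ and $D_2$, the irreducibility of $X^4+pX^2+p^2$ over $\Q$ --- are correct supplements the paper leaves implicit). The only blemish is the heuristic parameter count ($4-2-2=0$ would leave nothing, whereas the filtration condition in fact kills only one of the two scalars surviving equivariance, giving the one-parameter family you then correctly describe), but this is immaterial since the explicit matrix is verified by direct computation.
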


\begin{proof}
Just as in the proof of proposition~\ref{prodellipprop} we have $t_H(D)=2=t_N(D)$. Since 
$$\pchar(\varphi)(X)=X^4+pX^2+p^2=(X^2-\zeta_3p)(X^2-\zeta_3^{-1}p)$$
the nontrivial sub-$\Qp[\varphi]$-modules of $D$ are the $D_i=\Qp x_i\oplus\Qp y_i$ for $i=1,2$ both having Newton invariant $t_N(D_i)=1$, and Hodge invariants $t_H(D_1)=1$, $t_H(D_2)=0$. Again we obtain a nonsplit exact sequence $\text{\sc (s)}$ in ${\bf MF}_{\Qp}^{\adm}(\varphi)$ and $(D,\Fil)$ is not semisimple.

The action of $\varphi$ is semisimple and $\pchar(\varphi)=\pchar(\text{Fr}_{A})$ where $A$ is a supersingular simple abelian surface over $\Fp$ with $\pchar(\text{Fr}_{A})(X)=X^4+pX^2+p^2$. Thus $(D,\Fil)$ satisfies condition (1) of theorem~\ref{abvarthm} as well as condition (a) of section~\ref{sec:method}. It obviously satisfies (2) and it remains to check (3). Let $\mathcal{B}^*=(x_1^*,y_1^*,x_2^*,y_2^*)$ be the dual basis of $\mathcal{B}$ for $D^*$. Again $(\Fil^1D)^{\perp}=\Qp y_2^*\oplus\Qp(y_1^*-x_2^*)$ and the matrix of $p\varphi^{*\,-1}$ over $\mathcal{B}^*$ is 
 $$p\Mat_{\mathcal{B}}(\varphi^{-1})^{t}=\,\begin{pmatrix}
0 & \zeta_3^{-1} & 0 & 0 \\
p & 0  & 0 & 0 \\
0 & 0  & 0 & \zeta_3 \\
0 & 0  & p & 0
\end{pmatrix}.$$
Let $\delta:D^*\rightarrow D$ be the $\Qp$-linear morphism with matrix over the bases $\mathcal{B}^*$ and $\mathcal{B}$  
$$\Mat_{\mathcal{B}^*\!\!,\mathcal{B}}(\delta)=\,\begin{pmatrix}
0 & 0 & 0 & \zeta_3 \\
0 & 0  & 1 & 0 \\
0 & -1  & 0 & 0 \\
-\zeta_3 & 0  & 0 & 0
\end{pmatrix}.$$
As in the proof of proposition~\ref{prodellipprop} one checks that $\delta$ is invertible, satisfies $\delta^*=-\delta$, $\varphi\delta=p\delta\varphi^{*\,-1}$, and that $\delta(\Fil^1D)^{\perp}=\Fil^1D$. 
\end{proof}

\begin{rem}
The objects $(D_1,\Fil^iD\cap D_1)$ and $(D_2,\Fil^iD\bmod D_1)$ in ${\bf MF}_{\Qp}^{\adm}(\varphi)$ do not arise from elliptic schemes over $\Zp$, however~\cite{Ki} Thm.0.3 shows the existence of $p$-divisible groups $\mathcal{G}_i$ over $\Zp$ such that $D_i\simeq\Dcr(V_p(\mathcal{G}_i))$. The special fibre of $\mathcal{A}(p)$ is $\Fp$-isogenous to the product of the special fibres of the $\mathcal{G}_i$, themselves being nonisogenous. Thus we obtain a nonsplit exact sequence of $G$-modules 
$$\xymatrix{1\ar[r]  &  V_p(\mathcal{G}_2)\ar[r] & V_p(\mathcal{A})\ar[r] &  V_p(\mathcal{G}_1)\ar[r] & 1}$$
and Tate's full faithfulness theorem shows that $\mathcal{A}(p)$ is not $\Zp$-isogenous to $\mathcal{G}_1\times\mathcal{G}_2$.
\end{rem}

\begin{rem}
Starting with $X^4-pX^2+p^2$ when $p\equiv 1\bmod 3\Z$ and $X^4+p^2$ when $p\equiv 1\bmod 4\Z$ one obtains alike nonsemisimple $4$-dimensional supersingular representations (just replace $\zeta_3$ by $\zeta_6$ or $\zeta_4$). More generally the 
$$p^d\Phi_n\Bigl(\frac{X^2}{p}\Bigr)=\prod_{i\in(\Z/n\Z)^{\times}}(X^2-\zeta_n^ip)\qquad\text{ with }\,d=\#\bigl(\Z/n\Z\bigr)^{\times}\geq 2$$
where $\Phi_n$ is the $n$th cyclotomic polynomial are supersingular $p$-Weil polynomials leading when $p\equiv 1\bmod n\Z$ to similar higher-dimensional constructions. 
\end{rem}

\bigskip

\bigskip

\end{document}